\documentclass[12pt,reqno]{amsart} 
\usepackage{amssymb,amscd,url}


\begin{document}

\allowdisplaybreaks


\title{Arithmetic and Dynamical Degrees on Abelian Varieties}
\date{\today}
\author[Joseph H. Silverman]{Joseph H. Silverman}
\email{jhs@math.brown.edu}
\address{Mathematics Department, Box 1917
         Brown University, Providence, RI 02912 USA}
\subjclass[2010]{Primary: 37P30; Secondary:  11G10, 11G50, 37P15}
\keywords{dynamical degree, arithmetic degree, abelian variety}
\thanks{The author's research supported by Simons Collaboration Grant
  \#241309}




\hyphenation{ca-non-i-cal semi-abel-ian}


\newtheorem{theorem}{Theorem}
\newtheorem{lemma}[theorem]{Lemma}
\newtheorem{conjecture}[theorem]{Conjecture}
\newtheorem{proposition}[theorem]{Proposition}
\newtheorem{corollary}[theorem]{Corollary}
\newtheorem{claim}[theorem]{Claim}

\theoremstyle{definition}
\newtheorem*{definition}{Definition}
\newtheorem{example}[theorem]{Example}
\newtheorem{remark}[theorem]{Remark}
\newtheorem{question}[theorem]{Question}

\theoremstyle{remark}
\newtheorem*{acknowledgement}{Acknowledgements}


\newenvironment{notation}[0]{%
  \begin{list}%
    {}%
    {\setlength{\itemindent}{0pt}
     \setlength{\labelwidth}{4\parindent}
     \setlength{\labelsep}{\parindent}
     \setlength{\leftmargin}{5\parindent}
     \setlength{\itemsep}{0pt}
     }%
   }%
  {\end{list}}

\newenvironment{parts}[0]{%
  \begin{list}{}%
    {\setlength{\itemindent}{0pt}
     \setlength{\labelwidth}{1.5\parindent}
     \setlength{\labelsep}{.5\parindent}
     \setlength{\leftmargin}{2\parindent}
     \setlength{\itemsep}{0pt}
     }%
   }%
  {\end{list}}
\newcommand{\Part}[1]{\item[\upshape#1]}

\def\Case#1#2{%
\paragraph{\textbf{\boldmath Case #1: #2.}}\hfil\break\ignorespaces}

\renewcommand{\a}{\alpha}
\renewcommand{\b}{\beta}
\newcommand{\g}{\gamma}
\renewcommand{\d}{\delta}
\newcommand{\e}{\epsilon}
\newcommand{\f}{\varphi}
\newcommand{\bfphi}{{\boldsymbol{\f}}}
\renewcommand{\l}{\lambda}
\renewcommand{\k}{\kappa}
\newcommand{\lhat}{\hat\lambda}
\newcommand{\m}{\mu}
\newcommand{\bfmu}{{\boldsymbol{\mu}}}
\renewcommand{\o}{\omega}
\renewcommand{\r}{\rho}
\newcommand{\rbar}{{\bar\rho}}
\newcommand{\s}{\sigma}
\newcommand{\sbar}{{\bar\sigma}}
\renewcommand{\t}{\tau}
\newcommand{\z}{\zeta}

\newcommand{\D}{\Delta}
\newcommand{\G}{\Gamma}
\newcommand{\F}{\Phi}
\renewcommand{\L}{\Lambda}

\newcommand{\ga}{{\mathfrak{a}}}
\newcommand{\gb}{{\mathfrak{b}}}
\newcommand{\gn}{{\mathfrak{n}}}
\newcommand{\gp}{{\mathfrak{p}}}
\newcommand{\gP}{{\mathfrak{P}}}
\newcommand{\gq}{{\mathfrak{q}}}

\newcommand{\Abar}{{\bar A}}
\newcommand{\Ebar}{{\bar E}}
\newcommand{\kbar}{{\bar k}}
\newcommand{\Kbar}{{\bar K}}
\newcommand{\Pbar}{{\bar P}}
\newcommand{\Sbar}{{\bar S}}
\newcommand{\Tbar}{{\bar T}}
\newcommand{\gbar}{{\bar\gamma}}
\newcommand{\lbar}{{\bar\lambda}}
\newcommand{\ybar}{{\bar y}}
\newcommand{\phibar}{{\bar\f}}

\newcommand{\Acal}{{\mathcal A}}
\newcommand{\Bcal}{{\mathcal B}}
\newcommand{\Ccal}{{\mathcal C}}
\newcommand{\Dcal}{{\mathcal D}}
\newcommand{\Ecal}{{\mathcal E}}
\newcommand{\Fcal}{{\mathcal F}}
\newcommand{\Gcal}{{\mathcal G}}
\newcommand{\Hcal}{{\mathcal H}}
\newcommand{\Ical}{{\mathcal I}}
\newcommand{\Jcal}{{\mathcal J}}
\newcommand{\Kcal}{{\mathcal K}}
\newcommand{\Lcal}{{\mathcal L}}
\newcommand{\Mcal}{{\mathcal M}}
\newcommand{\Ncal}{{\mathcal N}}
\newcommand{\Ocal}{{\mathcal O}}
\newcommand{\Pcal}{{\mathcal P}}
\newcommand{\Qcal}{{\mathcal Q}}
\newcommand{\Rcal}{{\mathcal R}}
\newcommand{\Scal}{{\mathcal S}}
\newcommand{\Tcal}{{\mathcal T}}
\newcommand{\Ucal}{{\mathcal U}}
\newcommand{\Vcal}{{\mathcal V}}
\newcommand{\Wcal}{{\mathcal W}}
\newcommand{\Xcal}{{\mathcal X}}
\newcommand{\Ycal}{{\mathcal Y}}
\newcommand{\Zcal}{{\mathcal Z}}

\renewcommand{\AA}{\mathbb{A}}
\newcommand{\BB}{\mathbb{B}}
\newcommand{\CC}{\mathbb{C}}
\newcommand{\FF}{\mathbb{F}}
\newcommand{\GG}{\mathbb{G}}
\newcommand{\NN}{\mathbb{N}}
\newcommand{\PP}{\mathbb{P}}
\newcommand{\QQ}{\mathbb{Q}}
\newcommand{\RR}{\mathbb{R}}
\newcommand{\ZZ}{\mathbb{Z}}

\newcommand{\bfa}{{\mathbf a}}
\newcommand{\bfb}{{\mathbf b}}
\newcommand{\bfc}{{\mathbf c}}
\newcommand{\bfd}{{\mathbf d}}
\newcommand{\bfe}{{\mathbf e}}
\newcommand{\bff}{{\mathbf f}}
\newcommand{\bfg}{{\mathbf g}}
\newcommand{\bfp}{{\mathbf p}}
\newcommand{\bfr}{{\mathbf r}}
\newcommand{\bfs}{{\mathbf s}}
\newcommand{\bft}{{\mathbf t}}
\newcommand{\bfu}{{\mathbf u}}
\newcommand{\bfv}{{\mathbf v}}
\newcommand{\bfw}{{\mathbf w}}
\newcommand{\bfx}{{\mathbf x}}
\newcommand{\bfy}{{\mathbf y}}
\newcommand{\bfz}{{\mathbf z}}
\newcommand{\bfA}{{\mathbf A}}
\newcommand{\bfF}{{\mathbf F}}
\newcommand{\bfB}{{\mathbf B}}
\newcommand{\bfD}{{\mathbf D}}
\newcommand{\bfG}{{\mathbf G}}
\newcommand{\bfI}{{\mathbf I}}
\newcommand{\bfM}{{\mathbf M}}
\newcommand{\bfzero}{{\boldsymbol{0}}}

\newcommand{\Aut}{\operatorname{Aut}}
\newcommand{\Base}{\mathcal{B}} 
\newcommand{\CM}{\operatorname{CM}}   
\newcommand{\codim}{\operatorname{codim}}
\newcommand{\diag}{\operatorname{diag}}
\newcommand{\Disc}{\operatorname{Disc}}
\newcommand{\Div}{\operatorname{Div}}
\newcommand{\Dom}{\operatorname{Dom}}
\newcommand{\Ell}{\operatorname{Ell}}   
\newcommand{\End}{\operatorname{End}}
\newcommand{\Fbar}{{\bar{F}}}
\newcommand{\Gal}{\operatorname{Gal}}
\newcommand{\GL}{\operatorname{GL}}
\newcommand{\Hom}{\operatorname{Hom}}
\newcommand{\hplus}{h^{\scriptscriptstyle+}}
\newcommand{\Index}{\operatorname{Index}}
\newcommand{\Image}{\operatorname{Image}}
\newcommand{\liftable}{{\textup{liftable}}}
\newcommand{\hhat}{{\hat h}}
\newcommand{\Ker}{{\operatorname{ker}}}
\newcommand{\Lift}{\operatorname{Lift}}
\newcommand{\Mat}{\operatorname{Mat}}
\newcommand{\MOD}[1]{~(\textup{mod}~#1)}
\newcommand{\Mor}{\operatorname{Mor}}
\newcommand{\Moduli}{\mathcal{M}}
\newcommand{\Norm}{{\operatorname{\mathsf{N}}}}
\newcommand{\notdivide}{\nmid}
\newcommand{\normalsubgroup}{\triangleleft}
\newcommand{\NotDom}{\operatorname{NotDom}}
\newcommand{\NS}{\operatorname{NS}}
\newcommand{\odd}{{\operatorname{odd}}}
\newcommand{\onto}{\twoheadrightarrow}
\newcommand{\ord}{\operatorname{ord}}
\newcommand{\Orbit}{\mathcal{O}}
\newcommand{\Per}{\operatorname{Per}}
\newcommand{\Perp}{\operatorname{Perp}}
\newcommand{\PrePer}{\operatorname{PrePer}}
\newcommand{\PGL}{\operatorname{PGL}}
\newcommand{\Pic}{\operatorname{Pic}}
\newcommand{\Prob}{\operatorname{Prob}}
\newcommand{\Qbar}{{\bar{\QQ}}}
\newcommand{\rank}{\operatorname{rank}}
\newcommand{\Rat}{\operatorname{Rat}}
\newcommand{\rel}{{\textup{rel}}}  
\newcommand{\Resultant}{\operatorname{Res}}
\renewcommand{\setminus}{\smallsetminus}
\newcommand{\shat}{{\hat s}}
\newcommand{\sing}{{\textup{sing}}} 
\newcommand{\sgn}{\operatorname{sgn}} 
\newcommand{\SL}{\operatorname{SL}}
\newcommand{\Span}{\operatorname{Span}}
\newcommand{\Spec}{\operatorname{Spec}}
\newcommand{\Support}{\operatorname{Supp}}
\newcommand{\That}{{\hat T}}  
\newcommand{\tors}{{\textup{tors}}}
\newcommand{\Trace}{\operatorname{Trace}}
\newcommand{\tr}{{\textup{tr}}} 
\newcommand{\UHP}{{\mathfrak{h}}}    
\newcommand{\<}{\langle}
\renewcommand{\>}{\rangle}

\newcommand{\ds}{\displaystyle}
\newcommand{\longhookrightarrow}{\lhook\joinrel\longrightarrow}
\newcommand{\longonto}{\relbar\joinrel\twoheadrightarrow}

\newcount\ccount \ccount=0
\def\cc{\global\advance\ccount by1{c_{\the\ccount}}}


\begin{abstract}
Let \(\phi:X\dashrightarrow X\) be a dominant rational map of a smooth
variety and let \(x\in X\), all defined over \(\bar{\mathbb Q}\).  The
dynamical degree \(\delta(\phi)\) measures the geometric complexity of
the iterates of \(\phi\), and the arithmetic degree \(\alpha(\phi,x)\)
measures the arithmetic complexity of the forward \(\phi\)-orbit of
\(x\).  It is known that \(\alpha(\phi,x)\le\delta(\phi)\), and it is
conjectured that if the \(\phi\)-orbit of \(x\) is Zariski dense in
\(X\), then \(\alpha(\phi,x)=\delta(\phi)\), i.e., arithmetic
complexity equals geometric complexity. In this note we prove this
conjecture in the case that \(X\) is an abelian variety, extending
earlier work in which the conjecture was proven for isogenies.
\end{abstract}


\maketitle


\section{Introduction}
Let $K=\Qbar$, or more generally let $K$ be an algebraically closed
field on which one has a good theory of height functions as described,
for example, in \cite[Part~B]{hindrysilverman:diophantinegeometry} or
\cite[Chapters~2--5]{lang:diophantinegeometry}. Let~$X$ be a smooth
projective variety of dimension~$d$, let $\f:X\dashrightarrow X$ be a
dominant rational map, and let~$H$ be an ample divisor on~$X$, all
defined over~$K$. Further let $h_{X,H}:X(K)\to[1,\infty)$ be a Weil
height function associated to~$H$.  We write~$\f^n$ for the $n$'th
iterate of~$\f$.

\begin{definition}
The \emph{dynanmical degree of~$\f$} is the quantity
\[
  \d(\f) = \lim_{n\to\infty} 
     \Bigl(\bigl((\f^n)^*H\bigr) \cdot H^{d-1}\Bigr)^{1/n}.
\]
\end{definition}

\begin{definition}
Let $x\in X$ be a point whose forward $\f$-orbit
\[
  \Orbit_\f(x)=\{\f^n(x):n\ge0\}
\]
is well-defined.  The \emph{arithmetic degree of~$x$} (\emph{relative
  to~$\f$}) is the quantity
\[
  \a(\f,x) = \lim_{n\to\infty} h_{X,H}\bigl(f^n(x)\bigr)^{1/n}.
\]
\end{definition}
 
It is known that the limit defining~$\d(\f)$ exists and is a
birational invariant; see~\cite[Proposition~1.2(iii)]{MR2179389}
and~\cite[Corollary~16]{kawsilv:arithdegledyndeg}.  Bellon and
Viallet~\cite{MR1704282} conjectured that~$\d(\f)$ is an algebraic
integer.  Kawaguchi and the author~\cite{kawsilv:arithdegledyndeg}
proved that~$\a(\f,x)\le\d(\f)$, and they made the following
conjectures about the arithmetic degree and its relation to the
dynamical degree.

\begin{conjecture}
\label{conjecture:mainconj}
\textup{(Kawaguchi--Silverman \cite{kawsilv:arithdegledyndeg,arxiv1111.5664})}
\begin{parts}
\Part{(a)}
The limit defining $\a(\f,x)$ exists.
\Part{(b)}
$\a(\f,x)$ is an algebraic integer.
\Part{(c)}
$\bigl\{\a(\f,x) : \text{$x\in X$ such that $\Orbit_\f(x)$ exists}\bigr\}$
is a finite set.
\Part{(d)}
If the orbit $\Orbit_\f(x)$ is Zariski dense in~$X$, then
$\a(\f,x)=\d(\f)$.
\end{parts}
\end{conjecture}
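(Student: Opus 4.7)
The plan is to reduce to the case of pure isogenies, where Kawaguchi and the author have already established the conjecture. Since $A$ is an abelian variety, every dominant rational self-map $\f: A \dashrightarrow A$ extends to a morphism of the canonical form $\f(x) = \psi(x) + c$ with $\psi \in \End(A)$ an isogeny and $c = \f(0_A) \in A$; one may moreover assume $\d(\f) > 1$, since the case $\d(\f) = 1$ is immediate from the sandwich $1 \le \a(\f, x) \le \d(\f)$.

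When $1 - \psi$ is itself an isogeny, $\f$ has a fixed point $x_0$ satisfying $(1-\psi)(x_0) = c$, and the change of coordinates $y = x - x_0$ conjugates $\f$ into the pure isogeny $y \mapsto \psi(y)$. Heights on an abelian variety are invariant under translation up to a N\'eron--Tate correction of lower order, so $\a(\f, x) = \a(\psi, x - x_0)$, and Zariski density of the orbit is preserved. The isogeny case of Conjecture~\ref{conjecture:mainconj} then gives the desired equality.

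In general, let $B \subset A$ denote the sub-abelian variety corresponding to the full generalized $1$-eigenspace of $\psi$ on $\operatorname{Lie}(A)$---equivalently, the maximal sub-abelian variety on which $\psi - 1$ acts nilpotently as an endomorphism. Then $\psi(B) \subset B$, so $\psi$ and $\f$ descend to $\bar\psi$ and $\bar\f$ on the quotient $\pi: A \to A/B$; by construction $\bar\psi$ has no eigenvalue $1$, so $1 - \bar\psi$ is an isogeny and the previous paragraph applies to yield $\a(\bar\f, \pi(x)) = \d(\bar\f)$, provided the orbit $\Orbit_{\bar\f}(\pi(x))$ is Zariski dense in $A/B$ (which follows from the Zariski density of $\Orbit_\f(x)$ in $A$ under the surjection $\pi$). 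Two comparisons then complete the proof: $\d(\bar\f) = \d(\f)$, because deleting the eigenvalues equal to $1$ from the spectrum of $\psi$ on $\operatorname{Lie}(A)$ does not change the largest modulus when $\d(\f) > 1$; and a standard height comparison along the surjective morphism $\pi$ gives $\a(\f, x) \ge \a(\bar\f, \pi(x))$. Combined with the universal upper bound $\a(\f, x) \le \d(\f)$, equality follows.

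The principal difficulty lies in identifying the correct sub-abelian variety $B$ and in confirming that $\psi - 1$ is nilpotent as an endomorphism of $B$, rather than merely as a linear map on its tangent space. This reduction rests on the fact that nilpotence of an endomorphism at the level of Tate modules forces it to annihilate every torsion point, and Zariski density of torsion on $A$ then forces the corresponding iterate of the endomorphism to vanish outright. Once $B$ is correctly identified, the remaining ingredients---constructing the fixed point on $A/B$, conjugating to a pure isogeny, and comparing heights across $\pi$---are routine manipulations with the standard machinery of heights on abelian varieties.
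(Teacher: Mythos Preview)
Your argument (which, like the paper, treats part~(d) in the abelian-variety setting, i.e., Theorem~\ref{theorem:mainthm}) is correct and shares the paper's central idea of isolating the generalized $1$-eigenspace of $\psi$, but the two executions differ. The paper realizes the splitting as an isogeny $\l:A_1\times A_2\to A$ with $A_1=(\psi-1)^r A$ and $A_2=F_2(\psi)A$ (here $F_2$ is the prime-to-$(X-1)$ factor of the minimal polynomial of $\psi$), and then computes $\a$ and $\d$ on \emph{both} factors: on $A_1$ by the fixed-point reduction you describe, and on $A_2$ by explicit polynomial-growth estimates coming from the expansion of $X^n\bmod (X-1)^r$, before reassembling via a product lemma (Lemma~\ref{lemma:ddadproduct}) and a finite-map lemma (Lemma~\ref{lemma:finitemapsameda}). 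You instead take the quotient by $B$ (your $B$ is the paper's $A_2$), apply the fixed-point reduction only on $A/B$, and close with two one-sided comparisons: $\a(\f,x)\ge\a(\bar\f,\pi(x))$ from the height inequality along the surjection $\pi$, and $\d(\bar\f)=\d(\f)$ from the spectral description of the dynamical degree. Your route is tidier---it never touches the unipotent factor directly and bypasses both auxiliary lemmas---but it leans on the identification of $\d(\psi)$ with the square of the spectral radius of $\psi$ on $H^1$ (or, in characteristic zero, on $\operatorname{Lie}(A)$), which the paper avoids by its self-contained intersection-theoretic proof that $\d=1$ on $A_2$. The ``principal difficulty'' you flag, namely constructing $B$ so that $\psi-1$ is genuinely nilpotent on it as an endomorphism, is exactly what the paper's Lemma~\ref{lemma:AB1B2} handles via the resultant of $(X-1)^r$ and $F_2$.
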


Conjecture~\ref{conjecture:mainconj}(a,b,c) is known when~$\f$ is a
morphism~\cite{kawsilv:jordanblock}.
But~(d), which is the deepest part of the conjecture, has been proven
in only a few situations, such as group endomorphisms of the
torus~$\GG_m^d$ \cite[Theorem~4]{arxiv1111.5664}, group endomorphisms
of abelian varieties~\cite[Theorem~4]{kawsilv:jordanblock}, and in a few other
special cases; see~\cite[Section~8]{kawsilv:arithdegledyndeg}. The
goal of this note is to extend the result
in~\cite{kawsilv:jordanblock} to arbitrary dominant self-maps of abelian
varieties.

\begin{theorem}
\label{theorem:mainthm}
Let~$A/K$ be an abelian variety, let~$\f:A\to A$ be a dominant
rational map, and let~$P\in A$ be a point whose orbit~$\Orbit_\f(P)$
is Zariski dense in~$A$. Then
\[
  \a(\f,P)=\d(\f).
\]
\end{theorem}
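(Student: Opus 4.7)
My strategy is to reduce the theorem to the case of a group endomorphism, where the conclusion is already established in~\cite[Theorem~4]{kawsilv:jordanblock}. By a theorem of Weil, every rational map from a smooth variety into an abelian variety extends to a morphism, so $\f$ is actually a morphism $A\to A$. Any such morphism factors uniquely as $\f=T_Q\circ\psi$, where $Q=\f(0)\in A(K)$, $T_Q$ is translation by~$Q$, and $\psi\in\End(A)$ is a group homomorphism; dominance of~$\f$ forces~$\psi$ to be surjective, hence an isogeny.

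The key maneuver is translation conjugation: $T_{-R}\circ\f\circ T_R(y) = \psi(y) + \bigl(\psi(R) + Q - R\bigr)$, so choosing~$R$ with $(I-\psi)(R)=Q$ reduces~$\f$ to~$\psi$. When $I-\psi$ is itself an isogeny such an $R\in A(K)$ exists. Translation conjugation preserves Zariski density of orbits; it preserves the dynamical degree by birational invariance; and the standard inequality $|h_{A,H}(x-R)-h_{A,H}(x)|\ll\sqrt{h_{A,H}(x)}+1$, valid on any abelian variety, ensures that arithmetic degrees coincide after taking $n$th roots. Applying~\cite[Theorem~4]{kawsilv:jordanblock} to the dense $\psi$-orbit of $P-R$ then yields $\a(\f,P)=\a(\psi,P-R)=\d(\psi)=\d(\f)$.

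The main obstacle is the case in which $I-\psi$ fails to be surjective, that is, when~$\psi$ has~$1$ as an eigenvalue. Here I would invoke Poincar\'e's reducibility theorem together with the semisimplicity of $\End(A)\otimes\QQ$ to produce an isogeny $A'\times A''\to A$ under which~$\psi$ corresponds to a product $\psi'\times\psi''$ with~$\psi'$ unipotent (all eigenvalues equal~$1$) and $I-\psi''$ an isogeny on~$A''$. Since isogenies preserve both~$\d$ and~$\a$, one works on the product, where~$\f$ splits as $\f'\times\f''$ with $\f'(x')=\psi'(x')+Q'$ and $\f''(x'')=\psi''(x'')+Q''$. The $A''$-factor is covered by the previous paragraph. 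On the $A'$-factor, unipotence gives $\d(\f')=1$, and the trivial lower bound $\a\ge 1$ together with the inequality $\a\le\d$ of~\cite{kawsilv:arithdegledyndeg} forces $\a(\f',P')=1$. Dynamical and arithmetic degrees on a product abelian variety take the maximum over factors, so $\a(\f,P)=\d(\f)$ in all cases. The delicate step is constructing the decomposition $A\sim A'\times A''$ so that the splitting is simultaneously $\psi$-equivariant, which is precisely where semisimplicity of $\End(A)\otimes\QQ$ enters.
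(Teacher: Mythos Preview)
Your proposal is correct and follows essentially the same architecture as the paper's proof: write $\f=\t_Q\circ f$ with $f$ an isogeny; when $f-1$ is an isogeny, conjugate by a translation to reduce to the pure-isogeny case handled in~\cite[Theorem~4]{kawsilv:jordanblock}; otherwise split $A$ up to isogeny into a piece on which $f-1$ is an isogeny and a piece on which $f$ is unipotent, and treat the two factors separately using the behaviour of $\d$ and $\a$ under finite maps and products.

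Two remarks are worth making. First, your treatment of the unipotent factor is a genuine simplification: you observe that unipotence forces $\d(\f')=1$ and then invoke the general sandwich $1\le\a(\f',P')\le\d(\f')$ from~\cite{kawsilv:arithdegledyndeg}, whereas the paper carries out an explicit height estimate for $\hhat\bigl(\f_2^n(P_2)\bigr)$ via the expansion $f_2^n=\sum_{j<r}c_{r,j}(n)f_2^j$. Second, the ``delicate step'' you flag is not actually resolved by Poincar\'e reducibility or by semisimplicity of $\End(A)\otimes\QQ$: a Poincar\'e complement of a $\psi$-stable abelian subvariety need not itself be $\psi$-stable, and semisimplicity of the full endomorphism algebra says nothing about a single element. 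What does work---and is exactly the paper's Lemma~\ref{lemma:AB1B2}---is to factor a monic polynomial $F$ annihilating $f$ as $(X-1)^rF_2(X)$ with $F_2(1)\ne0$, and to set $A_1=(f-1)^rA$ and $A_2=F_2(f)A$; these are automatically $f$-stable, their sum is $A$, and their intersection is finite. With that construction in place of your appeal to semisimplicity, your argument goes through as written.
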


\begin{remark}
Every map as in Theorem~\ref{theorem:mainthm} is a composition of a
translation and an isogeny (see Remark~\ref{remark:rigidity}), so we
can write~$\f:A\to A$ as
\[
  \f(P) = f(P) + Q
\]
with~$f:A\to A$ an isogeny and~$Q\in A$.  As already noted, if~$Q=0$,
then Theorem~\ref{theorem:mainthm} was proven
in~\cite{kawsilv:jordanblock}, and it may seem that the introduction
of translation by a non-zero~$Q$ introduces only a minor complication
to the problem. However, the potential interaction between the
points~$P$ and~$Q$ may lead to signficiant changes in both the orbit of~$P$
and the value of the arithmetic degree~$\a(\f,P)$ .

To illustrate the extent to which taking~$Q\ne0$ is significant,
consider the following related question. For which~$\f$ are there any
points~$P\in A$ whose $\f$-orbit~$\Orbit_\f(P)$ is Zariski dense
in~$A$?  If~$Q=0$, this question is easy to answer, e.g., by using
Poincar\'e reducibility~\cite[Section~19, Theorem~1]{MR0282985}. But
if~$Q\ne0$ and the field~$K$ is countable, for example~$K=\Qbar$, then
the problem becomes considerably more difficult. Indeed, the solution,
which only recently appeared in~\cite{arxiv1412.2029}, uses Faltings'
theorem (Mordell--Lang conjecture) on the intersection of subvarieties
of~$A$ with finitely generated subgroups of~$A$. So at present it
requires deep tools to determine whether there exist any points~$P\in
A(K)$ to which Theorem~\ref{theorem:mainthm} applies.
\end{remark}

We briefly outline the contents of this note. We begin in
Section~\ref{section:notation} by setting notation.
Section~\ref{section:prelim} contains a number of preliminary results
describing how dynamical and arithmetic degrees vary in certain
situations. We then apply these tools and results from earlier work to
give the proof of Theorem~\ref{theorem:mainthm} in
Section~\ref{section:proofmainthm}. Finally, in
Section~\ref{section:auxlemma} we prove an auxiliary lemma on
pullbacks and pushforwards of divisors that is needed for one of the
proofs in Section~\ref{section:prelim}.

\section{Notation}
\label{section:notation}

We set the following notation, which will be used for the remainder
of this note.

\begin{notation}
\item[$K$]
an algebraically closed field on which their is a good theory of height
functions. For example, the algebraic closure of~$\QQ$ or 
the algebraic closure of a one-dimensional function field.
\item[$A/K$]
an abelian variety of dimension $d$ defined over $K$.
\item[$Q$]
a point in $A(K)$.
\item[$\t_Q$]
the translation-by-$Q$ map,
\[
  \t_Q:A\longrightarrow A,\quad
  \t_Q(P)=P+Q.
\]
\item[$f$]
an isogeny $f:A\to A$ defined over~$K$.
\item[$\f$]
the finite map $\f:A\to A$ given by $\f=\t_Q\circ f$, i.e.,
\[
  \f(P)=f(P)+Q.
\]
\item[$h_{A,H}$] 
a height function $h_{A,H}:A(K)\to\RR$ associated to an ample
divisor~$H\in\Div(A)$; see for
example~\cite{hindrysilverman:diophantinegeometry,lang:diophantinegeometry}.
\item[$\f^n$]
the $n$'th iterate of~$\f$, i.e., $\f^n(P)=\f\circ\f\circ\cdots\circ\f(P)$.
\item[$\Orbit_\f(P)$]
the forward $\f$-orbit of~$P$, i.e., the set $\{\f^n(P):n\ge0\}$.
\end{notation}

\begin{remark}
\label{remark:rigidity}
It is a standard fact that every rational map~$A\dashrightarrow A$ is
a morphism, and that every finite morphism $A\to A$ is the composition
of an isogeny and a translation~\cite[Section~4,
  Corollary~1]{MR0282985}.  Hence the set of dominant rational
maps~$A\dashrightarrow A$ is the same as the set of maps of the
form~$\f=\t_Q\circ f$ as in our notation.
\end{remark}

As noted earlier, since~$\f:A\to A$ is a morphism, it is
known~\cite{kawsilv:arithdegledyndeg} that the limit
defining~$\a_\f(P)$ exists (and is an algebraic integer).

\section{Preliminary material}
\label{section:prelim}
In this section we collect some basic results that are needed to prove
Theorem~\ref{theorem:mainthm}. We begin with a standard (undoubtedly
well-known) decomposition theorem.

\begin{lemma}
\label{lemma:AB1B2}
Let $A$ be an abelian variety, let~$f:A\to A$ be an isogeny, and
let~$F(X)\in\ZZ[X]$ be a polynomial such that~$F(f)=0$ in
$\End(A)$. Suppose that~$F$ factors as
\[
  F(X)=F_1(X)F_2(X)\quad\text{with $F_1,F_2\in\ZZ[X]$ and
    $\gcd(F_1,F_2)=1$,}
\]
where the gcd is computed in~${\QQ[X]}$.  Let
\[
  A_1 = F_1(f)A\quad\text{and}\quad A_2=F_2(f)A,
\]
so~$A_1$ and~$A_2$ are abelian subvarieties of~$A$. Then we have\textup:
\begin{parts}
\Part{(a)}
$A=A_1+A_2$.
\Part{(b)}
$A_1\cap A_2$ is finite.
\end{parts}
More precisely, if we let $\rho=\Resultant(F_1,F_2)$,
then $A_1\cap A_2\subset A[\rho]$.
\end{lemma}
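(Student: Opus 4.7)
The plan is to use a Bezout-type identity for $F_1$ and $F_2$ with integer coefficients controlled by the resultant. Since $\gcd(F_1,F_2)=1$ in $\QQ[X]$, standard properties of the resultant give polynomials $U,V\in\ZZ[X]$ such that
\[
   U(X)F_1(X)+V(X)F_2(X) = \rho,
\]
where $\rho=\Resultant(F_1,F_2)\in\ZZ\setminus\{0\}$. Substituting the endomorphism $f$ yields the identity
\[
   U(f)\circ F_1(f)+V(f)\circ F_2(f) = \rho\cdot 1_A \quad\text{in $\End(A)$.}
\]

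For part~(a), the key observation is that $A_1=F_1(f)A$ is an $f$-stable subvariety: since $f$ commutes with every polynomial in $f$, we have $f(A_1)=F_1(f)f(A)=F_1(f)A=A_1$, and therefore $U(f)(A_1)\subset A_1$. Similarly $V(f)(A_2)\subset A_2$. Evaluating the Bezout identity on a point $P\in A$ gives $\rho P\in A_1+A_2$, so $\rho\cdot A\subset A_1+A_2$. Because multiplication by the nonzero integer $\rho$ is surjective on the abelian variety $A$, we conclude $A=A_1+A_2$.

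For part~(b), let $P\in A_1\cap A_2$, so $P=F_1(f)Q_1=F_2(f)Q_2$ for some $Q_1,Q_2\in A$. Using $F(f)=F_1(f)F_2(f)=0$ we compute
\[
   F_2(f)P = F_2(f)F_1(f)Q_1 = F(f)Q_1 = 0,
\]
and symmetrically $F_1(f)P=0$. Substituting $P$ into the Bezout identity gives
\[
   \rho\cdot P = U(f)F_1(f)P+V(f)F_2(f)P = 0,
\]
so $P\in A[\rho]$, which is a finite subgroup.

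The step I expect to need the most care is verifying the integral Bezout identity with the resultant on the right-hand side; this is a classical fact (it can be read off from the cofactor expansion of the Sylvester matrix), and once it is in hand the argument for both parts is essentially formal manipulation together with the surjectivity of multiplication by $\rho$. The only other point to keep straight is the distinction between the image $A_i=F_i(f)A$ and the kernels $\ker F_i(f)$, which play dual roles in parts (a) and (b) respectively.
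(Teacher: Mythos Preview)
Your proof is correct and follows essentially the same approach as the paper: both use the integral Bezout identity $U(X)F_1(X)+V(X)F_2(X)=\rho$ coming from the resultant, then combine it with the $f$-stability of $A_1,A_2$ and the surjectivity of multiplication by $\rho$ for part~(a), and with $F(f)=0$ for part~(b). The only cosmetic difference is that in part~(b) you first deduce $F_1(f)P=F_2(f)P=0$ and then apply the Bezout identity, whereas the paper substitutes the two representations of $P$ directly into $\rho P$; the content is the same.
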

\begin{proof}
The gcd assumption on~$F_1$ and~$F_2$ implies that their resultant is
non-zero, so we can find polynomials~$G_1,G_2\in\ZZ[X]$ so that
\[
  G_1(X)F_1(X)+G_2(X)F_2(X) = \rho = \Resultant(F_1,F_2) \ne 0.
\]
We observe that $fA_1\subset A_1$ and $fA_2\subset A_2$ and compute
\begin{align*}
  A = \rho A
  &= \bigl(G_1(f)F_1(f)+G_2(f)F_2(f)\bigr)A \\*
  &= G_1(f)A_1 + G_2(f)A_2 \\*
  &\subset A_1+A_2 \subset A.
\end{align*}
Hence $A=A_1+A_2$. This proves~(a). For~(b), suppose that
$P\in A_1\cap A_2$, so
\[
  P = F_1(f)P_1  = F_2(f)P_2
  \quad\text{for some $P_1\in A_1$ and $P_2\in A_2$.}
\]
Then
\begin{align*}
  \rho P
  &= \bigl(G_1(f)F_1(f)+G_2(f)F_2(f)\bigr)P \\
  &= G_1(f)F_1(f)F_2(f)P_2 + G_2(f)F_2(f)F_1(f)P_1\\
  &= G_1(f)F(f)P_2 + G_2(f)F(f)P_1 
   \quad\text{since $F=F_1F_2$,}\\
  &= 0\quad\text{since $F(f)=0$.}
\end{align*}
Hence $A_1\cap A_2\subset A[\rho]$.
\end{proof}

The next two lemmas relate dynamical and arithmetic degrees.  We state
them somewhat more generally than needed in this note, since the proofs are
little more difficult and they may be useful for future applications.
The first lemma says that dynamical and arithmetic degrees are
invariant under finite maps, and the second describes dynamical and
arithmetic degrees on products. 

\begin{lemma}
\label{lemma:finitemapsameda}
Let $X$ and $Y$ be non-singular projective varieties, and let
\[
  \begin{CD}
   X @>\l>> Y \\
  @VV f_X V @VV f_Y V \\
   X @>\l>> Y \\
  \end{CD}
\]
be a commutative diagram, where~$f_X$ and~$f_Y$ are dominant rational
maps and~$\l$ is a finite map, with everything defined over~$K$
\begin{parts}
\Part{(a)}
Let~$x\in X$ whose orbit~$\Orbit_{f_X}(x)$ is
well-defined. Then~$\Ocal_{f_X}(x)$ is Zariski dense in~$X$ if and
only if~$\Ocal_{f_Y}\bigl(\l(x)\bigr)$ is Zariski dense in~$Y$.
\Part{(b)}
The dynamical degrees of~$f_X$ and $f_Y$ are equal,
\[
  \d(f_X)=\d(f_Y)
\]
\Part{(c)}
Let~$P\in X$ be a point such that the forward~$f_X$-orbit of~$P$
and the arithmetic degree of~$P$ relative to~$f_X$ are
well-defined. Then the arithmetic degrees of~$P$ and~$\l(P)$ satisfy
\[
  \a(f_X,P) = \a\bigl(f_Y,\l(P)\bigr).
\]
\end{parts}
\end{lemma}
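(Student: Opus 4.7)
The overall plan is to reduce everything on $X$ to a single ample divisor compatible with $\l$, namely $H_X:=\l^*H_Y$, which is ample because $\l$ is a finite morphism. Since both $\d$ and $\a$ are independent of the choice of ample divisor used in their definitions, this reduction loses no information while letting me push the commutative diagram directly into the computations.

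For part (a), the key ingredients are that a finite dominant morphism between irreducible projective varieties of the same dimension is surjective and closed (being proper), and that the diagram gives $\l\bigl(\Orbit_{f_X}(x)\bigr)=\Orbit_{f_Y}\bigl(\l(x)\bigr)$. I would then argue: if $\Orbit_{f_X}(x)$ is dense and $W=\overline{\Orbit_{f_Y}(\l(x))}$, then $\l^{-1}(W)$ is a closed subset of $X$ containing the dense orbit, so $\l^{-1}(W)=X$, and surjectivity forces $W=\l(X)=Y$; conversely, if $V:=\overline{\Orbit_{f_X}(x)}\subsetneq X$, then $\l(V)$ is closed with $\dim\l(V)=\dim V<\dim X=\dim Y$ and contains $\Orbit_{f_Y}(\l(x))$, contradicting density.

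For parts (b) and (c), I exploit $\l\circ f_X^n=f_Y^n\circ\l$ in two parallel ways. On the divisor side, this yields the class identity $(f_X^n)^*H_X=\l^*(f_Y^n)^*H_Y$, and the projection formula for the finite map $\l$ of degree $m=\deg\l$ then gives
\[
\bigl((f_X^n)^*H_X\bigr)\cdot H_X^{d-1}=m\cdot\bigl((f_Y^n)^*H_Y\bigr)\cdot H_Y^{d-1};
\]
taking $n$'th roots and letting $n\to\infty$, the factor $m^{1/n}\to 1$, proving (b). On the height side, functoriality of Weil heights gives $h_{X,H_X}=h_{Y,H_Y}\circ\l+O(1)$, so
\[
h_{X,H_X}\bigl(f_X^n(P)\bigr)=h_{Y,H_Y}\bigl(f_Y^n(\l(P))\bigr)+O(1),
\]
whose $n$'th-root limit yields (c).

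The main obstacle I expect is making the pullback identity $(f_X^n)^*\l^*H_Y=\l^*(f_Y^n)^*H_Y$ fully rigorous: the operation $(f_X^n)^*$ is defined for the rational map $f_X^n$ via a resolution of indeterminacy, and the compatibility $(g\circ h)^*=h^*\circ g^*$ holds cleanly only for morphisms. The remedy I have in mind is to pass to a smooth birational model $\pi_n:\widetilde X_n\to X$ on which $f_X^n\circ\pi_n$ is a morphism and resolve $f_Y^n$ similarly; on these models the usual functoriality for morphisms applies, and the birational invariance of $\d$ together with the invariance of $\a$ under choice of ample height allows transferring the resulting intersection and height computations back to the original varieties.
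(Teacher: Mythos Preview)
Your proposal is correct and follows essentially the same approach as the paper: choose $H_X=\l^*H_Y$ (ample since $\l$ is finite), use the commutative square to get $(f_X^n)^*H_X=\l^*(f_Y^n)^*H_Y$ and the projection formula for~(b), and height functoriality for~(c), with~(a) handled by the closedness and surjectivity of~$\l$. You have also correctly anticipated the one genuine technical point---the compatibility $(f_X^n)^*\circ\l^*=\l^*\circ(f_Y^n)^*$ for rational $f_X^n,f_Y^n$---which the paper isolates and proves as a separate auxiliary lemma via exactly the resolution-of-indeterminacy strategy you describe.
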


\begin{remark}
Lemma~\ref{lemma:finitemapsameda} is a special (relatively easy) case
of results of Dinh--Nguyen~\cite{MR2851870} and
Dinh--Nguyen--Truong~\cite{MR2989646}.  For completeness, we give an
algebraic proof, in the spirit of the present paper, which works in
arbitrary characteristic.
\end{remark}

\begin{proof}[Proof of Lemma $\ref{lemma:finitemapsameda}$]
(a) 
We first remark that the~$f_Y$ orbit of~$\l(x)$ is also well-defined.
To see this, let~$n\ge1$ and let~$U$ be any Zariski open set on
which~$f_X^n$ is well-defined. Then~$\l\circ f_X^n$ is also
well-defined on~$U$, since~$\l$ is a morphism. Also, since~$\l$ is a
finite map, the image~$\l(U)$ is a Zariski open set, and we note
that~$f_Y^n$ on the set~$\l(U)$ agrees with~$\l\circ f_X^n$
on~$U$. Thus~$f_Y^n$ is defined on~$\l(U)$. In particular,
since~$f_X^n$ is assumed defined at~$x$, we see that~$f_Y^n$ is
defined at~$\l(x)$.

Suppose that $Z=\overline{\Ocal_{f_X}(x)}\ne
X$. Then~$\l(Z)$ is a proper Zariski closed subset of~$Y$, since
finite maps send closed sets to closed sets. Further,
\[
  \Ocal_{f_Y}\bigl(\l(x)\bigr) = \l\bigl(\Ocal_{f_X}(x)\bigr)
  \subset \l(Z).
\]
Hence $\Ocal_{f_Y}\bigl(\l(x)\bigr)$ is not Zariski dense.
Conversely, suppose that $W=\overline{\Ocal_{f_Y}\bigl(\l(x)\bigr)}\ne
Y$. Finite maps (and indeed, morphisms) are continuous for the Zariski
topology, so~$\l^{-1}(W)$ is a closed subset of~$X$, and the fact
that~$\l$ is a finite map, hence surjective, implies
that~$\l^{-1}(W)\ne X$. Then
\[
  \Ocal_{f_X}(x) 
  \subset \l^{-1}\Bigl(\Ocal_{f_Y}\bigl(\l(x)\bigr)\Bigr)
  \subset \l^{-1}(W) \subsetneq X,
\]
so $\Ocal_{f_X}(x)$ is not Zariski dense in~$X$.
\par\noindent(b)\enspace
Let $d=\dim(X)=\dim(Y)$, and let $H_Y$ be an ample divisor on~$Y$. The
assumption that~$\l$ is a finite morphism implies that~$H_X:=\l^*H_Y$
is an ample divisor on~$X$.  This follows from
\cite[Exercise~5.7(d)]{hartshorne}, or we can use the Nakai--Moishezon
Criterion \cite[Theorem~A.5.1]{hartshorne} and note that for every
irreducible subvariety~$W\subset X$ of dimension~$r$ we have
\[
  \l_*(H_X\cdot W^r) = \l_*(\l^*H_Y\cdot W^r) = H_Y\cdot(\l_*W)^r > 0,
\]
since~$\l_*W$ is a positive multiple of an $r$-dimensional irreducible
subvariety of~$Y$.  This means that we can use~$H_X$ to
compute~$\d(f_X)$. In the following computation we use
that fact that since~$\l$ is a finite morphism, we have
\begin{equation}
  \label{eqn:fxNllfYN}
  (f_X^N)^*\circ\l^*
  = (\l\circ f_X^N)^*
  = (f_Y^N\circ\l)^*
  = \l^*\circ (f_Y^N)^*.
\end{equation}
We give the justification for this formula at the end of this paper
(Lemma~\ref{lemma:lffl}), but we note that for the proof of
Theorem~\ref{theorem:mainthm}, all of the relevant maps are morphisms,
so~\eqref{eqn:fxNllfYN} is trivially true. We compute
\begin{align*}
  \d(f_X)
  &= \lim_{n\to\infty} \Bigl( (f_X^n)^*H_X \cdot H_X^{d-1} \Bigr)^{1/n} \\*
  &= \lim_{n\to\infty} \Bigl( (f_X^n)^*\circ\l^*H_Y \cdot (\l^*H_Y)^{d-1}
           \Bigr)^{1/n} \\
  &= \lim_{n\to\infty} \Bigl( \l^*\circ(f_Y^n)^*H_Y \cdot (\l^*H_Y)^{d-1}
           \Bigr)^{1/n}
     \quad\text{from \eqref{eqn:fxNllfYN},} \\
  &= \lim_{n\to\infty} \Bigl( \deg(\l) \bigl((f_Y^n)^*H_Y \cdot ^*H_Y^{d-1}\bigr)
           \Bigr)^{1/n} \\
  &= \lim_{n\to\infty} \Bigl( (f_Y^n)^*H_Y \cdot H_Y^{d-1} \Bigr)^{1/n} \\*
  &= \d(f_Y).
\end{align*}
This completes the proof of~(b).
\par\noindent(c)\enspace
We do an analogous height computation, where the~$O(1)$ quantities
depend on~$X$,~$Y$,~$\l$,~$f_X$,~$f_Y$, and the choice of height functions
for~$H_X$ and~$H_Y$, but do not depend of~$n$.
\begin{align*}
  \a(f_X,P)
  &= \lim_{n\to\infty} h_{X,H_X}\bigl(f_X^n(P)\bigr)^{1/n} \\*
  &= \lim_{n\to\infty} h_{X,\l^*H_Y}\bigl(f_X^n(P)\bigr)^{1/n} \\
  &= \lim_{n\to\infty} \Bigl(
   h_{X,H_Y}\bigl(\l\circ f_X^n(P)\bigr) + O(1) \Bigr)^{1/n} \\
  &= \lim_{n\to\infty} \Bigl(
   h_{X,H_Y}\bigl(f_Y^n\circ\l(P)\bigr) + O(1) \Bigr)^{1/n} \\*
  &= \a\bigl(f_Y,\l(P)\bigr).
\end{align*}
This completes the proof of~(c).
\end{proof}

\begin{lemma}
\label{lemma:ddadproduct}
Let $Y$ and $Z$ be non-singular projective varieties,  let
\[
  f_Y:Y\to Y\quad\text{and}\quad
  f_Z:Z\to Z
\]
be dominant rational maps, and let $f_{Y,Z}:=f_Y\times f_Z$ be the
induced map on the product~$Y\times Z$, with everything defined over~$K$.  
\begin{parts}
\Part{(a)}
Let~$y\in Y$ and~$z\in Z$ be points whose forward orbits via~$f_Y$,
respectively~$f_Z$, are well-defined, and suppose that
$\Orbit_{f_{Y,Z}}(y,z)$ is Zariski dense in $Y\times Z$.  Then
$\Orbit_{f_{Y}}(y)$ is Zariski dense in $Y$ and $\Orbit_{f_{Z}}(z)$ is
Zariski dense in~$Z$.
\Part{(b)}
The dynamical degrees of~$f_Y$,~$f_Z$, and~$f_{Y,Z}$ are related by
\[
  \d(f_{Y,Z})=\max\bigl\{\d(f_Y),\d(f_Z)\bigr\}.
\]
\Part{(c)}
Let~$(P_Y,P_Z)\in (Y\times Z)(K)$ be a point such that the arithmetic
degrees~$\a(f_Y,P_Y)$ and~$\a(f_Z,P_Z)$ are well-defined. Then
\[
  \a\bigl(f_{Y,Z},(P_Y,P_Z)\bigr)
    =\max\bigl\{\a(f_Y,P_Y),\a(f_Z,P_Z)\bigr\}.
\]
\end{parts}
\end{lemma}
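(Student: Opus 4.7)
The overall plan is to combine the projections $\pi_Y : Y\times Z \to Y$ and $\pi_Z : Y\times Z \to Z$ with the ample divisor $H := \pi_Y^* H_Y + \pi_Z^* H_Z$ on $Y\times Z$, where $H_Y$ and $H_Z$ are chosen ample divisors on $Y$ and $Z$. The relations $\pi_Y \circ f_{Y,Z} = f_Y \circ \pi_Y$ and $\pi_Z \circ f_{Y,Z} = f_Z \circ \pi_Z$ drive every step. For~(a), since $\pi_Y\bigl(\Orbit_{f_{Y,Z}}(y,z)\bigr) = \Orbit_{f_Y}(y)$, if $\overline{\Orbit_{f_Y}(y)} = W \subsetneq Y$ then $\Orbit_{f_{Y,Z}}(y,z) \subset W\times Z$, a proper Zariski closed subset of $Y\times Z$, contradicting density; the argument for $Z$ is symmetric.

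For~(b), set $d_Y = \dim Y$, $d_Z = \dim Z$, and $d = d_Y + d_Z$. The commutativity yields
\[
(f_{Y,Z}^n)^* H = \pi_Y^*(f_Y^n)^* H_Y + \pi_Z^*(f_Z^n)^* H_Z.
\]
Expanding $H^{d-1}$ by the binomial theorem and invoking the vanishings $(\pi_Y^* H_Y)^{d_Y+1} = 0$ and $(\pi_Z^* H_Z)^{d_Z+1} = 0$ in the numerical ring of $Y\times Z$, only the two terms compatible with the K\"unneth decomposition survive, and one obtains
\[
\bigl((f_{Y,Z}^n)^* H\bigr) \cdot H^{d-1} = c_Y \bigl((f_Y^n)^* H_Y \cdot H_Y^{d_Y-1}\bigr) + c_Z \bigl((f_Z^n)^* H_Z \cdot H_Z^{d_Z-1}\bigr),
\]
with positive constants $c_Y = \binom{d-1}{d_Y-1}(H_Z^{d_Z})$ and $c_Z = \binom{d-1}{d_Y}(H_Y^{d_Y})$. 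The elementary identity $\lim_n (c_Y a_n + c_Z b_n)^{1/n} = \max\bigl\{\lim_n a_n^{1/n},\lim_n b_n^{1/n}\bigr\}$ for nonnegative sequences with positive weights then gives $\d(f_{Y,Z}) = \max\bigl\{\d(f_Y),\d(f_Z)\bigr\}$.

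For~(c), I would use the same $H$ together with the standard product-height decomposition
\[
h_{Y\times Z, H}(P,Q) = h_{Y, H_Y}(P) + h_{Z, H_Z}(Q) + O(1),
\]
which follows from linearity of heights in the divisor class and functoriality under pullback by $\pi_Y$ and $\pi_Z$. Applied to $f_{Y,Z}^n(P_Y, P_Z) = \bigl(f_Y^n(P_Y),\, f_Z^n(P_Z)\bigr)$ and combined with the same $(a_n + b_n)^{1/n}$ limit fact as in~(b), this yields the claimed formula for $\a$.

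The main obstacle is the dimensional bookkeeping in~(b): one must verify that among the many terms in the binomial expansion of $\bigl((f_{Y,Z}^n)^* H\bigr)\cdot H^{d-1}$, only the two K\"unneth-compatible pairs contribute, which rests on the vanishings $(\pi_Y^* H_Y)^{d_Y+1} = 0 = (\pi_Z^* H_Z)^{d_Z+1}$ in the intersection ring of $Y\times Z$. For rational (non-morphism) $f_Y,f_Z$, the identity $\pi_Y^*\circ(f_Y^n)^* = (f_{Y,Z}^n)^*\circ \pi_Y^*$ also needs justification in the spirit of Lemma~\ref{lemma:lffl}; in the application to Theorem~\ref{theorem:mainthm}, where all maps are morphisms of abelian varieties, this technicality disappears.
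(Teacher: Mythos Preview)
Your proposal is correct and follows essentially the same approach as the paper: the same ample divisor $H=\pi_Y^*H_Y+\pi_Z^*H_Z$, the same commutation $(f_{Y,Z}^n)^*H=\pi_Y^*(f_Y^n)^*H_Y+\pi_Z^*(f_Z^n)^*H_Z$, the same binomial expansion of $H^{d-1}$ leaving exactly the two K\"unneth-compatible terms, and the same product-height decomposition for~(c). Your explicit mention of the vanishings $(\pi_Y^*H_Y)^{d_Y+1}=0=(\pi_Z^*H_Z)^{d_Z+1}$ and of the pullback compatibility issue for rational maps is, if anything, slightly more careful than the paper's own write-up.
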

\begin{proof}[Proof of $\ref{lemma:ddadproduct}$]
(a )
This elementary fact has nothing to do with orbits.  Let $S\subset Y$
and $T\subset Z$ be sets of points. By symmetry, it suffices to prove
that if $S\times T$ is Zariski dense in~$Y\times Z$, then~$S$ is
Zariski dense in~$Y$. We prove the contrapositive, so assume that~$S$
is not Zariski dense in~$Y$. This means that there is a proper Zariski
closed subset~$W\subset Y$ with $S\subset W$. Then $S\times T\subset
W\times Z\subsetneq Y\times Z$, which shows that~$S\times T$ is not
Zariski dense in~$Y\times Z$.  \par\noindent(b)\enspace Let
\[
  \pi_Y:Y\times Z\to Y
  \quad\text{and}\quad
  \pi_Z:Y\times Z\to Z
\]
denote the projection maps.  Let $H_Y$ and $H_Z$ be, respectively,
ample divisors on~$Y$ and~$Z$.  Then
\[
  H_{Y,Z} := (H_Y\times Z) + (Y\times H_Z) = \pi_Y^*H_Y + \pi_Z^*H_Z
\]
is an ample divisor on~$Y\times Z$.  We compute
\begin{align*}
  (f_{Y,Z}^n)^*H_{Y,Z}
  &= (f_Y^n\times f_Z^n)^*(\pi_Y^*H_Y + \pi_Z^*H_Z) \\
  &= \pi_Y^*\circ (f_Y^n)^* H_Y + \pi_Z^*\circ (f_Z^n)^* H_Z.
\end{align*}
We let 
\[
  d_Y=\dim(Y),\quad  d_Z=\dim(Z),\quad\text{so}\quad
  \dim(Y\times Z) = d_Y+d_Z.
\]
We compute
\begin{align}
  \label{eqn:fYZnHYZ}
  (f_{Y,Z}^n)^*&H_{Y,Z}\cdot H_{Y,Z}^{d_Y+d_Z-1}\notag\\
  &= \Bigl(\pi_Y^*\circ (f_Y^n)^* H_Y + \pi_Z^*\circ (f_Z^n)^* H_Z\bigr)
       \cdot \Bigl( \pi_Y^*H_Y + \pi_Z^*H_Z \Bigr)^{d_Y+d_Z-1} \notag \\
  &=  \binom{d_y+d_Z-1}{d_Z}((f_Y^n)^* H_Y \cdot H_Y^{d_Y-1} )(H_Z^{d_Z}) \notag\\
  &\hspace{4em}
    {}  + \binom{d_y+d_Z-1}{d_Y}((f_Z^n)^* H_Z \cdot H_Z^{d_Z-1} )(H_Y^{d_Y}).
\end{align}

For any dominant rational self-map $f:X\to X$ of a non-singular
projective variety of dimension~$d$ and any ample divisor~$H$ on~$X$,
the dynamical degree of~$f$ is, by definition, the number~$\d(f)$
satisfying
\[
  \d(f)^n = (f^n)^*H \cdot H^{d-1} \cdot 2^{o(n)}
  \quad\text{as $n\to\infty$.}
\]
Using this formula three times in~\eqref{eqn:fYZnHYZ} yields
\[
  \d(f_{Y,Z})^n\cdot 2^{o(n)}
  = \d(f_Y)^n\cdot 2^{o(n)} \cdot H_Z^{d_Z}
        +  \d(f_Z)^n\cdot 2^{o(n)} \cdot H_Y^{d_Y}.
\]
The quantities $H_Y^{d_Y}$ and $H_Z^{d_Z}$ are positive, since~$H_Y$
and~$H_Z$ are ample. Now taking the $n$'th root of both sides and
letting~$n\to\infty$ gives the desired result, which completes
the proof of~(b).
\par\noindent(c)\enspace
We do a similar computation. Thus
\begin{align*}
  h&_{Y\times Z,H_{Y,Z}} \bigl(f_{Y,Z}^n(P_Y,P_Z)\bigr) \\
  &= h_{Y\times Z,\pi_Y^*H_Y} \bigl(f_{Y,Z}^n(P_Y,P_Z)\bigr)
   + h_{Y\times Z,\pi_Z^*H_Z} \bigl(f_{Y,Z}^n(P_Y,P_Z)\bigr) + O(1) \\
  &= h_{Y,H_Y} \bigl(\pi_Y\circ f_{Y,Z}^n(P_Y,P_Z)\bigr)
   + h_{Z,H_Z} \bigl(\pi_Z\circ f_{Y,Z}^n(P_Y,P_Z)\bigr) + O(1) \\
  &= h_{Y,H_Y} \bigl(f_Y^n(P_Y)\bigr)
   + h_{Z,H_Z} \bigl(f_Z^n(P_Z)\bigr) + O(1).
\end{align*}
For any dominant rational self-map $f:X\to X$ of a non-singular
projective variety defined over~$K$, any ample divisor~$H$ on~$X$,
and any~$P\in X(K)$ whose $f$-orbit is well-defined, the arithmetic
degree is the limit (if it exists)
\[
  \a(f,P) :=
  \lim_{n\to\infty} \hplus_{X,H}\bigl(f^n(P)\bigr)^{1/n}.
\]
(Here $\hplus=\max\{h,1\}$.) Hence
\begin{align*}
  \a\bigl(f_{Y,Z},(P_Y,P_Z)\bigr)
  &= \lim_{n\to\infty}
  \hplus_{Y\times Z,H_{Y,Z}} \bigl(f_{Y,Z}^n(P_Y,P_Z)\bigr)^{1/n} \\
  &= \lim_{n\to\infty}
  \Bigl( \hplus_{Y,H_Y} \bigl(f_Y^n(P_Y)\bigr)
   + \hplus_{Z,H_Z} \bigl(f_Z^n(P_Z)\bigr) + O(1) \Bigr)^{1/n} \\
  & =\max\bigl\{\a(f_Y,P_Y),\a(f_Z,P_Z)\bigr\},
\end{align*}
which completes the proof of~(c).
\end{proof}

\section{Proof of Theorem~$\ref{theorem:mainthm}$}
\label{section:proofmainthm}

\begin{proof}[Proof of Theorem~$\ref{theorem:mainthm}$]
The translation map~$\t_Q$ induces the identity map\footnote{Let
  $\mu:A\times A\to A$ be $\mu(x,y)=x+y$,  and let $D\in\Div(A)$.  Then for
  any~$P\in A$, the divisor $\mu^*D$ has the property that
  $\mu^*D|_{A\times\{P\}}=\t_P^*D\times\{P\}$. Hence as~$P$ varies, the
  divisors~$\t_P^*D$ are algebraically equivalent, so in particular
  $\t_Q^*D\equiv\t_0^*D\equiv D$, which shows that~$\t_Q^*$ is the
  identity map on~$\NS(A)$.}
\[
  \t_Q^*=\text{id}:\NS(A)\to\NS(A),
\]
from which we deduce that
\begin{equation}
  \label{eqn:dphidf}
  \f^*=f^*\quad\text{and}\quad \d_\f=\d_f.
\end{equation}

We begin by proving Theorem~\ref{theorem:mainthm} under the assumption
that a non-zero multiple of the point~$Q$ is in the image of the
map~$f-1$, say
\[
  mQ = (f-1)(Q')\quad\text{for some $m\ne0$ and $Q'\in A$.}
\]
Then we have
\begin{align}
  \label{eqn:phinPfnPQ}
  m \f^n(P)
  &= m \bigl(f^n(P) + (f^{n-1}+f^{n-2}+\cdots+f+1)(Q)\bigr) \notag \\*
  &=  f^n(mP) + (f^{n-1}+f^{n-2}+\cdots+f+1)(mQ) \notag \\*
  &= f^n(mP) + (f^{n-1}+f^{n-2}+\cdots+f+1)\circ(f-1)(Q') \notag \\
  &= f^n(mP)+f^n(Q')-Q' \notag \\*
  &= f^n(mP+Q')-Q'.
\end{align}
In particular, the~$\f$ orbit of~$P$ and the~$f$ orbit of~$mP+Q'$
differ by translation by~$-Q'$, so the assumption that~$\Orbit_\f(P)$
is Zariski dense and the fact that translation is an automorphism
imply that~$\Orbit_f(mP+Q')$ is also Zariski dense. 
We will also use the standard formula
\begin{equation}
  \label{eqn:hcircmm2h}
  h_{A,H}\circ m = m^2 h_{A,H}+O(1).
\end{equation}

We now compute (with additional explanation for steps~\eqref{eqn:afP1}
and~\eqref{eqn:afP2} following the computation)
\begin{align}
  \a_\f(P)
  &= \lim_{n\to\infty} h_{A,H}\bigl(\f^n(P)\bigr)^{1/n} 
     &&\text{by definition,} \notag \\
  &= \lim_{n\to\infty} h_{A,H}\bigl(m\f^n(P)\bigr)^{1/n} 
     &&\text{from \eqref{eqn:hcircmm2h},} \notag\\
  &= \lim_{n\to\infty} h_{A,H}\bigl(f^n(mP+Q')-Q'\bigr)^{1/n} 
     &&\text{from \eqref{eqn:phinPfnPQ},} \notag \\
  &= \lim_{n\to\infty} h_{A,\t_{-Q'}^*H}\bigl(f^n(mP+Q')\bigr)^{1/n} 
     &&\text{functoriality,} \notag \\
  &= \a_f(mP+Q')
     &&\text{by definition,} \label{eqn:afP1}\\
  &= \d_f
     &&\text{from
    \cite[Theorem 4]{kawsilv:jordanblock},} \label{eqn:afP2}\\
  &= \d_\f
     &&\text{from \eqref{eqn:dphidf}.} \notag
\end{align}
We note that~\eqref{eqn:afP1} follows
from~\cite[Proposition~12]{kawsilv:arithdegledyndeg}, which says that
the arithmetic degree may be computed using the height relative to any
ample divisor. (The map~$\t_{-Q'}$ is an isomorphism, so~$\t_{-Q'}^*H$
is ample.)  For~\eqref{eqn:afP2}, we have applied
\cite[Theorem~4]{kawsilv:jordanblock} to the isogeny~$f$ and the
point~$mP+Q'$, since we've already noted that~$\Ocal_f(mP+Q')$ is
Zariski dense. This completes the proof of
Theorem~\ref{theorem:mainthm} if~$Q\in(f-1)(A)$.

We now commence the proof in the general case. The Tate
module~$T_\ell(A)$ group of~$A$ has rank~$2d$, and an isogeny is zero
if and only if it induces the trivial map on the Tate module, from
which we see that~$f$ satisfies a monic integral polynomial equation
of degree~$2d$, say
\[
  F(f) = 0 \quad\text{with $F(X)\in\ZZ[X]$ monic.}
\]
We factor~$F(X)$ as
\[
  F(X) = F_1(X)F_2(X)
\]
with
\[
  \text{$F_1(X)=(X-1)^r$, \quad $F_2(X)\in\ZZ[X]$,\quad
      and\quad $F_2(1)\ne0$.}
\] 

We first deal with the case that~$r=0$. This means
that $F(1)\ne0$. Writing $F(X)=(X-1)G(X)+F(1)$, we have
\[
  0 = F(f)Q = (f-1)G(f)Q + F(1)Q,
\]
so
\[
  F(1)Q = -(f-1)G(f)Q \in (f-1)A.
\]
Thus a non-zero multiple of~$Q$ is in~$(f-1)A$, which is the case that we 
handled earlier.

We now assume that~$r\ge1$, and we define abelian subvarieties of~$A$
by
\[
  A_1=F_1(f)A\quad\text{and}\quad A_2=F_2(f)A
\]
and consider the map
\[
  \l : A_1 \times A_2 \longrightarrow A,\quad
  \l(P_1,P_2)=P_1+P_2.
\]
Lemma~\ref{lemma:AB1B2} tells us that~$\l$ is an isogeny. More
precisely, Lemma~\ref{lemma:AB1B2}(a) says that~$\l$ is surjective,
while Lemma~\ref{lemma:AB1B2}(b) tells us that
\[
  \Ker(\l) = \bigl\{(P,-P) : P\in A_1\cap A_2\bigr\} \cong A_1\cap A_2
\]
is finite.
\par
We recall the the map $\f:A\to A$ has the form $\f(P)=f(P)+Q$ for some
fixed~$Q\in A$. The map~$\l$ is onto, so we can find a
pair
\[
  \text{$(Q_1,Q_2)\in A_1\times A_2$\quad satisfying\quad$\l(Q_1,Q_2)=Q$, i.e.,
    $Q_1+Q_2=Q$.  }
\]
We observe that $fA_1\subset A_1$ and $fA_2\subset A_2$,
since~$f$ commutes with~$F_1(f)$ and~$F_2(f)$. Writing~$f_1$ and~$f_2$
for the restrictions of~$f$ to~$A_1$ and~$A_2$, respectively, we
define maps
\begin{align*}
  \f_1:A_1 &\longrightarrow A_1, & \f_1(P_1)=f_1(P_1)+Q_1, \\*
  \f_2:A_2 &\longrightarrow A_2, & \f_2(P_2)=f_2(P_2)+Q_2.
\end{align*}
Then
\begin{align*}
  \l\circ(\f_1\times\f_2)(P_1,P_2)
  &= \l\bigl(f_1(P_1)+Q_1,f_2(P_2)+Q_2\bigr)\\*
  &= f(P_1)+Q_1 + f(P_2)+Q_2\\
  &= f(P_1+P_2) + Q\\*
  &= \f\circ\l(P_1,P_2),
\end{align*}
which shows that we have a commutative diagram
\[
  \begin{CD}
  A_1\times A_2 @>\l>> A \\
  @VV \f_1\times \f_2 V    @VV \f V \\
  A_1\times A_2 @>\l>> A \\
  \end{CD}
\]

The map~$\l$ is an isogeny, so in particular it is a finite morphism,
so Lemma~\ref{lemma:finitemapsameda} with $X=A_1\times A_2$ and $Y=A$
says that
\begin{equation}
  \label{eqn:aaa}
  \d(\f_1\times \f_2) = \d(\f)
  \quad\text{and}\quad
  \a\bigl(\f_1\times \f_2,(P_1,P_2)\bigr) = \a(\f,P_1+P_2).
\end{equation}
Next we apply Lemma~\ref{lemma:ddadproduct} with $X=A_1$ and $Y=A_2$
to conclude that
\begin{align}
  \label{eqn:bbb}
  \d(\f_1\times \f_2) &= \max\bigl\{\d(\f_1),\d(\f_2)\bigr\},\\
  \label{eqn:ccc}
  \a\bigl(\f_1\times \f_2,(P_1,P_2)\bigr)
   &= \max\bigl\{\a(\f_1,P_1),\a(\f_2,P_2)\bigr\}.
\end{align}

We now fix a point~$P\in A$ whose orbit~$\Ocal_\f(P)$ is Zariski dense
in~$A$. Since~$\l$ is onto, we can write
\begin{equation}
  \label{eqn:eee}
  P=\l(P_1,P_2)=P_1+P_2 \quad\text{for some~$P_1\in A_1$ and~$P_2\in A_2$.}
\end{equation}
Then Lemma~\ref{lemma:finitemapsameda}(a) tells us that
the~$(\f_1\times\f_2)$-orbit of~$(P_1,P_2)$ is Zariski dense
in~$A_1\times A_2$, after which Lemma~\ref{lemma:ddadproduct}(a) tells
us that~$\Ocal_{\f_1}(P_1)$ is Zariski dense in~$A_1$
and~$\Ocal_{\f_2}(P_2)$ is Zariski dense in~$A_2$.

Under the assumption that~$\overline{\Ocal_{\f_1}(P_1)}=A_1$
and~$\overline{\Ocal_{\f_2}(P_2)}=A_2$, we are going to prove the
following result.
\begin{claim}
\label{claim:af1p1f2p2}
\begin{equation}
  \label{eqn:ddd}
  \a(\f_1,P_1)=\d(\f_1)
  \quad\text{and}\quad
  \a(\f_2,P_2)=\d(\f_2),
\end{equation}
\end{claim}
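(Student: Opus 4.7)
The plan is to handle the two abelian subvarieties separately, exploiting the complementary structure of the factors $F_1(X)=(X-1)^r$ and $F_2(X)$ (with $F_2(1)\neq 0$).

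For $A_1$: Any $P\in A_1$ has the form $P=F_1(f)R$ for some $R\in A$, so
\[
  F_2(f_1)(P) \;=\; F_2(f)F_1(f)R \;=\; F(f)R \;=\; 0,
\]
i.e.\ $F_2(f_1)\equiv 0$ on $A_1$. Since $F_2(1)\neq 0$, every $P\in\Ker(f_1-1)$ satisfies $F_2(1)P=F_2(f_1)P=0$, so $\Ker(f_1-1)\subset A_1[F_2(1)]$ is finite. Hence $f_1-1$ is an isogeny of $A_1$, in particular surjective, so I can write $Q_1=(f_1-1)(Q_1')$ for some $Q_1'\in A_1$. This puts us in exactly the setting of the first half of the proof of Theorem~\ref{theorem:mainthm}, now with $(A_1,\f_1,P_1,Q_1,Q_1')$ in place of $(A,\f,P,Q,Q')$ and $m=1$. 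The transcription of~\eqref{eqn:phinPfnPQ}, the functoriality chain yielding~\eqref{eqn:afP1}, and the application of \cite[Theorem~4]{kawsilv:jordanblock} to the isogeny $f_1$ on the orbit $\Ocal_{f_1}(P_1+Q_1')$ (Zariski dense in $A_1$ because it is a translate of $\Ocal_{\f_1}(P_1)$) together give $\a(\f_1,P_1)=\d(\f_1)$.

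For $A_2$: The parallel computation gives $F_1(f_2)=(f_2-1)^r=0$ on $A_2$, so $f_2$ is unipotent in $\End(A_2)$. Via the injective ring homomorphism $\End(A_2)\hookrightarrow\End(T_\ell(A_2))$, the endomorphism $f_2$ is unipotent on the Tate module as well; since the eigenvalues of $f_2^*$ on $\NS(A_2)\otimes\QQ_\ell$ are products of eigenvalues of $f_2$ on $T_\ell(A_2)\otimes\QQ_\ell$, they are all $1$, and hence $\d(f_2)=1$. Combined with~\eqref{eqn:dphidf} applied to $\f_2$, this gives $\d(\f_2)=1$. On the arithmetic side, the convention that heights take values in $[1,\infty)$ forces $\a(\f_2,P_2)\geq 1$, while $\a(\f_2,P_2)\leq\d(\f_2)=1$ by the basic inequality of~\cite{kawsilv:arithdegledyndeg}; hence $\a(\f_2,P_2)=1=\d(\f_2)$. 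Notably, the Zariski density of $\Ocal_{\f_2}(P_2)$ plays no role.

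The $A_2$ case is essentially automatic once one knows that unipotent isogenies have dynamical degree $1$, so the only step of real substance is the $A_1$ case, and the main thing to verify is that the ``$Q\in(f-1)A$'' half of the proof of Theorem~\ref{theorem:mainthm} transports verbatim to $(A_1,\f_1,P_1)$. Since that earlier argument invokes only the group law, the isogeny property of $f$, the Zariski-density hypothesis, height functoriality under translation, and~\cite[Theorem~4]{kawsilv:jordanblock}, each of which has an exact analogue on the abelian subvariety $A_1$ with $f_1$ in the role of $f$, I expect the transcription to be routine rather than a genuine obstacle.
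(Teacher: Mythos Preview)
Your proof is correct. The $A_1$ half matches the paper's approach---show that $f_1-1$ has finite kernel on $A_1$, hence is surjective, and then invoke the special case already established---and your argument for finiteness (via $F_2(f_1)=0$ on $A_1$ and $F_2(1)\ne0$) is in fact a bit cleaner than the paper's, which routes through the resultant identity $G_1F_1+G_2F_2=\rho$ and the inclusion $A_1\cap A_2\subset A[\rho]$ to get $\Ker(f_1-1)\subset A[\rho^2]$. The $A_2$ half is where you genuinely diverge: the paper proves an elementary polynomial identity (Lemma~\ref{lemma:XnX1r}) expressing $X^n$ modulo $(X-1)^r$ with coefficients that are polynomials of degree at most $r-1$ in $n$, and then directly bounds both the intersection numbers $(f_2^n)^*H_2\cdot H_2^{d_2-1}$ and the canonical-height norm $\|\f_2^n(P_2)\|$ by explicit polynomials in $n$, forcing $\d(f_2)=1$ and $\a(\f_2,P_2)\le1$ by hand. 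You instead deduce $\d(f_2)=1$ from unipotence of $f_2$ on $T_\ell(A_2)$ (hence of $f_2^*$ on $\NS(A_2)$, via the embedding of $\NS$ into $\wedge^2 H^1_\ell$) together with the spectral-radius description of the first dynamical degree for morphisms, and then pin $\a(\f_2,P_2)=1$ by the general inequality $\a\le\d$ of~\cite{kawsilv:arithdegledyndeg}. Your route is shorter and more conceptual, and, as you note, makes the Zariski-density of $\Ocal_{\f_2}(P_2)$ visibly irrelevant; the paper's route is fully self-contained and avoids importing either the spectral characterization of $\d$ or the nontrivial inequality $\a\le\d$ as black boxes.
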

Assuming this claim, the following computation completes the proof of
Theorem~\ref{theorem:mainthm}:
\begin{align*}
  \a(\f,P)
  &= \a(\f,P_1+P_2) 
    && \text{from \eqref{eqn:eee},}\\
  &= \a\bigl(\f_1\times \f_2,(P_1,P_2)\bigr) 
    && \text{from \eqref{eqn:aaa},}\\
  &= \max\bigl\{\a(\f_1,P_1),\a(\f_2,P_2)\bigr\}
    && \text{from \eqref{eqn:ccc},}\\
  &= \max\bigl\{\d(\f_1),\d(\f_2)\bigr\}
    && \text{from \eqref{eqn:ddd},}\\
  &= \d(\f_1\times\f_2)
    && \text{from \eqref{eqn:bbb},}\\
  &=\d(\f)
    && \text{from \eqref{eqn:aaa}.}
\end{align*}
\par
We now prove Claim~\ref{claim:af1p1f2p2}.  We note that if~$R\in A$
is in the kernel of the isogeny~$f-1$, then
\[
  \rho R = \bigl(G_1(f)(f-1)^r+G_2(f)F_2(f)\bigr)R
    = G_2(f)F_2(f)R \in A_2.
\]
Hence
\[
  R \in A_1 \cap \Ker(f-1)
  \enspace\Longrightarrow\enspace
  \rho R \in A_1\cap A_2 \subset A[\rho]
  \enspace\Longrightarrow\enspace
  R \in A[\rho^2].
\]
This proves that the group endomorphism
\[
  f_1-1 : A_1\longrightarrow A_1
\]
has finite kernel, so it is surjective. In particular, the
point~$Q_1\in A_1$ is in the image of~$f_1-1$, so
$\a_{\f_1}(P_1)=\d_{\f_1}$ from the special case of the theorem with
which we started the proof. This proves the first statement in
Claim~\ref{claim:af1p1f2p2}.

For the second statement in Claim~\ref{claim:af1p1f2p2}, we will show
that both $\a(\f_1,P_2)$ and~$\d(\f_2)$ are equal to~$1$. We use the
following elementary result.

\begin{lemma}
\label{lemma:XnX1r}
Fix $r\ge1$. There are polynomials~$c_{r,j}(T)\in\ZZ[T]$ of degree
at most~$r-1$ so that for all $n\ge0$ we have
\[
  X^n \equiv \sum_{j=0}^{r-1} c_{r,j}(n)X^j \pmod{(X-1)^r}.
\]
\end{lemma}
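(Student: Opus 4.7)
The natural route is a ``Taylor expansion at $X=1$''. Writing $X = 1 + (X-1)$ and applying the binomial theorem,
\[
X^n = \bigl(1+(X-1)\bigr)^n = \sum_{k=0}^{n}\binom{n}{k}(X-1)^k.
\]
All terms with $k \ge r$ vanish modulo $(X-1)^r$, so
\[
X^n \equiv \sum_{k=0}^{r-1}\binom{n}{k}(X-1)^k \pmod{(X-1)^r}.
\]
The key observation driving the whole argument is that $\binom{n}{k} = n(n-1)\cdots(n-k+1)/k!$ is, as a function of $n$, a polynomial of degree exactly $k \le r-1$.

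To put the congruence in the form stated in the lemma, I would then expand $(X-1)^k = \sum_{j=0}^{k}(-1)^{k-j}\binom{k}{j}X^j$ (with ordinary integer coefficients in $X$) and swap the order of summation to collect coefficients of $X^j$. This produces the explicit formula
\[
c_{r,j}(T) := \sum_{k=j}^{r-1}(-1)^{k-j}\binom{k}{j}\binom{T}{k},
\]
a polynomial of degree at most $r-1$ in $T$ (only the $k = r-1$ term contributes degree $r-1$), whose value at every $n \in \ZZ_{\ge 0}$ is an integer because each $\binom{n}{k}$ is.

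I foresee no real obstacle: the entire argument is essentially a one-line application of the binomial theorem combined with the elementary fact that $\binom{T}{k}$ is polynomial in $T$. The one technical point worth flagging is that $\binom{T}{k}$ has rational, not integer, coefficients as a polynomial in $T$, so the formula above lives in $\QQ[T]$ and is integer-valued on $\ZZ$ — which is exactly what the subsequent application needs (the quantity $c_{r,j}(n)$ will be used as an integer scalar acting on a point of $A$). Should one prefer a genuinely coefficient-integral approach, the alternative is induction on $n$: set $c_{r,j}(n) = \delta_{j,n}$ for $0 \le n \le r-1$, then propagate by multiplying through by $X$ and reducing the resulting $X^r$ via the integer-coefficient identity $X^r \equiv \sum_{k=0}^{r-1}\binom{r}{k}(X-1)^k \pmod{(X-1)^r}$. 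The fact that $n \mapsto c_{r,j}(n)$ is a polynomial of degree $\le r-1$ is then read off from the closed-form Taylor-series expression above.
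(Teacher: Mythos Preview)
Your argument is essentially identical to the paper's: both write $X^n=\bigl(1+(X-1)\bigr)^n$, truncate the binomial expansion modulo $(X-1)^r$, expand $(X-1)^k$ in powers of $X$, and swap sums to obtain the same explicit formula $c_{r,j}(T)=\sum_{k=j}^{r-1}(-1)^{k-j}\binom{k}{j}\binom{T}{k}$. Your remark that this formula actually lands in $\QQ[T]$ (integer-valued on $\ZZ$) rather than literally in $\ZZ[T]$ is well taken---the paper's proof has the same feature and the application only uses the integer values $c_{r,j}(n)$.
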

\begin{proof}
We compute
\begin{align*}
  X^n
  &= \bigl((X-1)+1\bigr)^n\\
  &= \sum_{k=0}^n \binom{n}{k} (X-1)^k \\
  &\equiv \sum_{k=0}^{r-1} \binom{n}{k} (X-1)^k \pmod{(X-1)^r}\\
  &\equiv \sum_{k=0}^{r-1} \binom{n}{k} \sum_{j=0}^k \binom{k}{j}(-1)^{k-j}X^j
        \pmod{(X-1)^r}\\
  &\equiv \sum_{j=0}^{r-1} 
     \left[\sum_{k=j}^{r-1}(-1)^{k-j} \binom{k}{j}\binom{n}{k}\right]  X^j
        \pmod{(X-1)^r}.
\end{align*}
The quantity in braces is~$c_{r,j}(n)$. 
\end{proof}

We now observe that
\[
  (f-1)^rA_2 = (f-1)^rF_2(f)A_2=F(f)A_2=0,
\]
since~$F(f)=0$, so we see that~$(f_2-1)^r$ kills~$A_2$.  So using
Lemma~\ref{lemma:XnX1r}, we find that the action of the iterates
of~$f_2$ on~$A_2$ is given by
\[
  f_2^n =  \sum_{j=0}^{r-1} c_{r,j}(n) f_2^j\in \End(A_2).
\]
Note that the polynomials~$c_{r,j}$ have degree at most~$r-1$ and do
not depend on~$n$.  

Let~$H_2$ be an ample symmetric divisor on~$A_2$, and
let~$d_2=\dim(A_2)$. Then
\begin{align*}
  \bigl((f_2^n)^*H_2\bigr)\cdot H_2^{d_2-1}  
  &= \left(\sum_{j=0}^{r-1} c_{r,j}(n) f_2^j\right)^*H_2 \cdot H_2^{d_2-1}  \\
  &= \sum_{j=0}^{r-1} \bigl(c_{r,j}(n) f_2^j\bigr)^*H_2 \cdot H_2^{d_2-1}  \\
  &= \sum_{j=0}^{r-1} c_{r,j}(n)^2 (f_2^j)^*H_2 \cdot H_2^{d_2-1}  \\
  &\le C(A_2,H_2,f_2)n^{2r-2},
\end{align*}
since the $c_{r,j}$ polynomials have degree at most~$r-1$.  (We have
also used the fact that since~$H_2$ is symmetric, we have $m^*H_2\sim
m^2H_2$ for any integer~$m$.)
This allow us to compute
\[
  \d(f_2) = \lim_{n\to\infty} 
   \Bigl(\bigl((f_2^n)^*H_2\bigr)\cdot H_2^{d_2-1}\Bigr)^{1/n}
  \le  \lim_{n\to\infty}  \bigl(C(A_2,H_2,f_2)n^{2r-2}\bigr)^{1/n}
  = 1,
\]
which shows that~$\d(f_2)=1$. 

We next do a similar height calculation. To ease notation, we write
\[
  \|R\| = \sqrt{\hhat_{A_2,H_2}(R)}
\]
for the norm associated to the $H_2$-canonical height on~$A_2$.
(See~\cite{hindrysilverman:diophantinegeometry,lang:diophantinegeometry}
for basic properties of canonical heights on abelian varieties.)  Then
\begin{align*}
  \bigl\| \f_2^n(P_2) \bigr\|
  &= \left\| f_2^n(P_2) + \sum_{i=0}^{n-1} f_2^i(Q_2) \right\| \\
  &= \left\| \sum_{j=0}^{r-1} c_{r,j}(n) f_2^j(P_2) 
      + \sum_{i=0}^{n-1} \sum_{j=0}^{r-1} c_{r,j}(i) f_2^j(Q_2) \right\| \\
  &\le (r+nr) \max_{\substack{0\le j<r\\ 0\le i\le n\\}} \bigl|c_{r,j}(i)\bigr|
       \cdot \max_{0\le j<r} \bigl\|f_2^j(Q_2)\bigr\| \\
  &\le C'(r,f_2,Q_2) n^r.
\end{align*}
This allows us to compute
\begin{align*}
  \a(\f_2,P_2)
  &=  \lim_{n\to\infty} \hhat_{H_2}\bigl(\f_2^n(P_2)\bigr)^{1/n} \\
  &\le  \lim_{n\to\infty} \bigl(C'(r,f_2,Q_2) n^r\bigr)^{2/n} \\  
  &= 1.
\end{align*}
Hence $\a(\f_2,P_2)=1$, which is also equal to~$\d(\f_2)$. This
completes the proof of the second part of Claim~\ref{claim:af1p1f2p2}
and with it, the proof of Theorem~\ref{theorem:mainthm}.
\end{proof}

\section{An Auxiliary Lemma}
\label{section:auxlemma}
In this final section we prove a lemma that is a bit stronger than is
needed to justify formula~\eqref{eqn:fxNllfYN}, which we used in the
proof of Lemma~\ref{lemma:finitemapsameda}.

\begin{lemma}
\label{lemma:lffl}
\begin{parts}
\Part{(a)}
Let $X,Y,Z$ be non-singular varieties, let $\l:Y\to Z$ be a morphism,
and let $\f:X\dashrightarrow Y$ be a rational map. Then
\[
  (\l\circ\f)^*=\f^*\circ\l^*
  \quad\text{as maps $\Pic(Z)\to\Pic(X)$.}
\]
\Part{(b)}
Let $W,X,Y$ be non-singular varieties, let $\l:W\to X$ be a finite
morphism, and let $\f:X\dashrightarrow Y$ be a rational map. Then
\[
  (\f\circ\l)^* = \l^*\circ\f^*
  \quad\text{as maps $\Pic(Y)\to\Pic(W)$.}
\]
\end{parts}
\end{lemma}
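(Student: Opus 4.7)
The plan for both parts is to reduce to the already-understood case of morphisms by restricting to the domain of definition of the rational map, and then extending via the fact that, on a non-singular variety, $\Pic$ is insensitive to removing closed subsets of codimension at least~$2$.

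The two background facts to invoke are: (i) for a rational map $\f:X\dashrightarrow Y$ out of a non-singular variety, the indeterminacy locus has codimension at least~$2$ in~$X$; and (ii) for a non-singular~$X$ with open subset~$U\subseteq X$ whose complement has codimension~$\ge 2$, the restriction map $\Pic(X)\to\Pic(U)$ is an isomorphism. The pullback of a line bundle by a rational map $\f:X\dashrightarrow Y$ is, by definition, obtained by pulling back under the morphism $\f|_U$ on the domain of definition~$U$ and then using~(ii) to extend uniquely from $\Pic(U)$ back to $\Pic(X)$.

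For part~(a), I would let $U\subseteq X$ be the domain of~$\f$. Since $\l$ is a morphism defined on all of~$Y$, the composition $\l\circ\f$ is also defined on all of~$U$, so both $\f|_U:U\to Y$ and $\l\circ\f|_U:U\to Z$ are honest morphisms. Ordinary functoriality of pullback for morphisms gives $(\l\circ\f|_U)^*=\f|_U^*\circ\l^*$ as maps $\Pic(Z)\to\Pic(U)$, and transporting this identity through the isomorphism $\Pic(U)\cong\Pic(X)$ yields $(\l\circ\f)^*=\f^*\circ\l^*$ on all of~$\Pic(X)$.

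For part~(b), set $U:=\textup{dom}(\f)\subseteq X$ and $V:=\l^{-1}(U)\subseteq W$. The key geometric input is that a finite morphism has zero-dimensional fibers, so $\l^{-1}$ preserves the codimension of closed subsets; consequently $W\setminus V=\l^{-1}(X\setminus U)$ has codimension at least~$2$ in~$W$, giving $\Pic(W)\cong\Pic(V)$ by fact~(ii). On~$V$, both $\l|_V:V\to U$ and $\f|_U:U\to Y$ are morphisms, so functoriality yields $(\f|_U\circ\l|_V)^*=\l|_V^*\circ\f|_U^*$, and extending via the two restriction isomorphisms on $\Pic$ produces the desired equality.

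The main obstacle is the codimension claim underlying part~(b): verifying that the preimage under a finite morphism of a codimension-$\ge 2$ subset is again codimension~$\ge 2$. This uses precisely the hypothesis that $\l$ is finite (not merely dominant), since finiteness forces the fibers to be zero-dimensional, which together with the standard dimension formula for morphisms guarantees that codimensions are preserved. Once this is in hand, the rest is bookkeeping with the extension isomorphism~(ii).
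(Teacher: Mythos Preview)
Your argument is correct and takes a genuinely different route from the paper. The paper works with the resolution-of-indeterminacy definition of pullback: it blows up $\pi:\tilde X\to X$ so that $\tilde\f:\tilde X\to Y$ is a morphism and sets $\f^*=\pi_*\circ\tilde\f^*$. Part~(a) then follows immediately from functoriality for the morphisms $\tilde\f$ and $\l$. Part~(b) requires a second blow-up $\mu:\tilde W\to W$ resolving $\pi^{-1}\circ\l$, and the heart of the argument is the identity $\l^*\circ\pi_*=\mu_*\circ\tilde\l^*$, proved by splitting divisors on $\tilde X$ into exceptional and horizontal pieces; finiteness of $\l$ enters to show that the $\mu$-pushforward of the pullback of an exceptional divisor is still zero. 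Your approach instead uses the equivalent description of $\f^*$ via restriction to the domain of definition and the isomorphism $\Pic(X)\cong\Pic(U)$ when $\operatorname{codim}(X\setminus U)\ge 2$. Finiteness of $\l$ is used in exactly the parallel place---to ensure $\l^{-1}(X\setminus U)$ still has codimension $\ge 2$ in $W$---but the bookkeeping is lighter since no blow-ups or pushforwards appear. The paper's version has the advantage of working directly with the resolution definition that is already in play elsewhere in the article (and makes the failure for non-finite $\l$ visible in a concrete divisor calculation, as the paper's footnote points out); your version is more elementary and avoids introducing auxiliary varieties.
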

\begin{proof}
(a) We blow up $\pi:\tilde X\to X$ to resolve the map~$\f$, so we 
have a commutative diagram
\[
  \begin{array}{ccccc}
    \tilde X \\
    \Big\downarrow{\scriptstyle\pi} & {\displaystyle\searrow}\,
             \raisebox{10pt}{\hbox{$\scriptstyle\tilde\f$}} \\
    X & \stackrel{\f}{\dashrightarrow} & Y & \xrightarrow{\;\l\;} Z \\
  \end{array}
\]
where~$\pi$ is a birational map and~$\tilde\f$ is a morphism.
Let~$D\in\Pic(Z)$. The map~$\l\circ\tilde\f$ is a morphism resolving
the rational map~$\l\circ\f$, so 
\begin{align*}
  (\l\circ\f)^*D 
  &= \pi_*(\l\circ\tilde\f)^* D
    &&\text{by definition of pull-back,} \\
  &= \pi_*\circ(\tilde\f^*\circ\l)^* D
     &&\text{since $\tilde\f$ and $\l$ are morphisms,} \\
  &= (\pi_*\circ\tilde\f^*)\circ\l^* D \\
  &= \f^*\circ\l^* D
    &&\text{by definition of pull-back.}
\end{align*}
\par\noindent(b)\enspace
We blow up $\pi:\tilde X\to X$ to resolve the map~$\f$, and then we blow
up~$W$ to resolve the map $\pi^{-1}\circ\l$. This gives a commutative
diagram
\[
  \begin{array}{ccccc}
    \tilde W & \xrightarrow{\;\tilde\l\;} & \tilde X \\
    \Big\downarrow{\scriptstyle\mu} & &
    \Big\downarrow{\scriptstyle\pi} & {\displaystyle\searrow}\,
             \raisebox{10pt}{\hbox{$\scriptstyle\tilde\f$}} \\
    W & \xrightarrow{\;\l\;} &  X & \stackrel{\f}{\dashrightarrow} & Y \\
  \end{array}
\]
Here~$\mu$ and~$\pi$ are birational morphisms and~$\tilde\l$
and~$\tilde\f$ are morphisms. We claim that
\begin{equation}
  \label{eqn:lphimul}
  \l^*\circ\pi_* = \mu_*\circ\tilde\l^*.
\end{equation}
Assuming the validity of~\eqref{eqn:lphimul}, we compute
\begin{align*}
  \l^*\circ\f^* D
  &= \l^*\circ\pi_*\circ\tilde\f^*D
    &&\text{by definition of pull-back,} \\
  &= \mu_*\circ\tilde\l^*\circ\tilde\f^*D
      &&\text{from \eqref{eqn:lphimul},} \\
  &= \mu_*\circ(\tilde\f\circ\tilde\l)^*D
      &&\text{since $\tilde\f$ and $\tilde\l$ are morphisms,} \\
  &= (\f\circ\l)^*D
    &&\text{by definition of pull-back,} 
\end{align*}
where for the last line we have used the fact that
$\tilde\f\circ\tilde\l$ is a morphism the resolves the rational
map~$\f\circ\l$. It remains to verify~\eqref{eqn:lphimul}.\footnote{We
  remark that~\eqref{eqn:lphimul} requires~$\l$ be a finite map. It is
  not true for morphisms, even birational morphisms. For example, let
  $W=\tilde X$ and $\l=\pi$ and $\mu=\text{id}_W$, then
  $\l^*\circ\pi_*=\pi^*\circ\pi_*$ kills exceptional divisors, while
  $\mu_*\circ\tilde\l^*$ is the identity map.}

Let $D\in\Div(\tilde X)$ be an irreducible divisor, and let~$|D|$
denote the support of~$D$. There are two cases.  First suppose
that~$D$ is an exceptional divisor, so $\pi_*D=0$. This means that
$\dim \pi\bigl(|D|\bigr)\le\dim(X)-2$, and since~$\tilde\l$ is surjective,
we have $\pi\bigl(|D|\bigr)=\l\circ\mu\circ\tilde\l^{-1}\bigl(|D|\bigr)$.
Hence
\[
  \dim \l\circ\mu\circ\tilde\l^{-1}\bigl(|D|\bigr) \le \dim(X)-2.
\]
We now use the fact that~$\l$ is a finite map to deduce that
\[
  \dim \mu\circ\tilde\l^{-1}\bigl(|D|\bigr) \le \dim(W)-2.
\]
It follows that
\[
  \mu_*\circ\tilde\l^*D = 0.
\]
Next suppose that~$D$ is a horizontal divisor relative to~$\pi$, so
$D=\pi^*\circ\pi_*D$. This allows us to compute
\begin{align*}
  \mu_*\circ\tilde\l^*D
  &= \mu_*\circ\tilde\l^*\circ\pi^*\circ\pi_*D
  &&\text{using $D=\pi^*\circ\pi_*D$,} \\
  &= \mu_*\circ(\pi\circ\tilde\l)^*\circ\pi_*D
  &&\text{since $\tilde\l$ and~$\pi$ are morphisms,}\\
  &= \mu_*\circ(\l\circ\mu)^*\circ\pi_*D
  &&\text{commutativity of the diagram} \\
  &= \mu_*\circ\mu^*\circ\l^*\circ\pi_*D
  &&\text{since $\l$ and~$\mu$ are morphisms,}\\
  &= \l^*\circ\pi_*D 
  &&\text{since $\mu_*\circ\mu^*=\text{id}_W^*$.}
\end{align*}
This shows in both cases that~$\mu_*\circ\tilde\l^*=\l^*\circ\pi_*$, which
completes the proof of~\eqref{eqn:lphimul}.
\end{proof}

\begin{acknowledgement}
I would like to thank Shu Kawaguchi for his helpful suggestions.
\end{acknowledgement}


\end{document}